\documentclass[11pt]{article}
\usepackage[T1]{fontenc}
\usepackage[utf8]{inputenc}
\usepackage{lmodern}
\usepackage{microtype}
\usepackage{amsmath,amssymb,amsthm,mathtools}
\usepackage[margin=1in]{geometry}
\usepackage{xcolor}
\usepackage{hyperref}
\usepackage{enumitem}
\hypersetup{colorlinks=true, linkcolor=blue!50!black, citecolor=blue!50!black, urlcolor=blue}

\theoremstyle{plain}
\newtheorem{theorem}{Theorem}[section]
\newtheorem{lemma}[theorem]{Lemma}
\newtheorem{corollary}[theorem]{Corollary}
\newtheorem{proposition}[theorem]{Proposition}
\theoremstyle{definition}
\newtheorem{definition}[theorem]{Definition}
\theoremstyle{remark}
\newtheorem{remark}[theorem]{Remark}

\newcommand{\Z}{\mathbb{Z}}
\newcommand{\E}{\mathbb{E}}
\newcommand{\flo}[1]{\left\lfloor #1 \right\rfloor}

\title{\bf A Quadratic Lower Bound for the Shield Number\\
of the Stable Marriage Problem:\\
Rearrangement, Extremal Construction, and Biclique Realizability}
\author{Yoshiteru Ishida\\[2pt]
\small Professor Emeritus, Toyohashi University of Technology, Toyohashi, Aichi, Japan\\
\small \texttt{ishida@cs.tut.ac.jp}\quad
\href{https://orcid.org/0000-0003-1641-5385}{ORCID: 0000-0003-1641-5385}}
\date{}

\begin{document}
\maketitle

\begin{abstract}
The \emph{Shield number} of the stable marriage problem is the minimum, over all size-$n$
strict complete preference profiles, of the maximum number of blocking pairs attainable by a
complete matching. We prove the universal quadratic lower bound
$\sigma(n)\ge\lceil n(n-2)/6\rceil$ by a layer-cake rearrangement argument and show that it is
the strongest bound obtainable from the uniform first-moment estimate. We also establish a sharp
min-rearrangement inequality, construct an explicit polarized matching in the cyclic Hollow-Shell
profile with $\lfloor(n-1)^2/4\rfloor$ blocking pairs, and characterize the realization of a
prescribed blocking biclique by two systems of distinct representatives. For the resulting
near-worst neighborhoods, we prove a suffix description, a sorted-depth Hall criterion, and a
distinct-worst sufficient condition. A factorized exhaustive computation verifies the balanced
Hall-feasible biclique property for every strict complete instance of size at most four. Finally,
we identify limitations of uniform averaging and two-parameter linear assignment objectives and
give a finite counterexample to a stronger rectangularity claim. All general mathematical claims
in the paper are unconditional; the exact Shield-Core equality for larger sizes is outside the
scope of this article.
\end{abstract}

\medskip
\noindent\textbf{MSC 2020.} 91B68 (primary); 05D99, 05C70, 06A07, 90C27.\\
\noindent\textbf{Keywords.} stable matching; blocking pairs; extremal combinatorics; minimax;
biclique; systems of distinct representatives; Hall's theorem.

\section{Introduction}

\subsection{The Shield number}

We work with the stable marriage problem (SMP) on $n$ men $m_0,\dots,m_{n-1}$ and $n$ women
$w_0,\dots,w_{n-1}$, each agent holding a strict complete preference order. A complete matching
$\mu$ (a bijection from men to women) has a \emph{blocking pair} $(m_i,w_j)$ if $m_i$ prefers
$w_j$ to $\mu(m_i)$ and $w_j$ prefers $m_i$ to $\mu^{-1}(w_j)$; write $B_S(\mu)$ for their number
in instance $S$ and $\beta(S)=\max_\mu B_S(\mu)$. A celebrated theorem of Gale and Shapley
guarantees a stable matching ($\beta$-witnessing matchings notwithstanding, some $\mu$ has
$B_S(\mu)=0$). The dual extremal quantity---how unstable a matching can be \emph{forced} to be---
is the \emph{Shield number}
\[
\sigma(n)\ =\ \min_S\ \beta(S)\ =\ \min_S\ \max_\mu\ B_S(\mu).
\]
An instance $S$ is a good ``shield'' if no matching has many blocking pairs; $\sigma(n)$ measures
the best achievable shielding.

This extremal profile-design problem is different from the standard \emph{almost-stable matching}
literature, which fixes an instance and minimizes instability over matchings, often when a stable
or maximum-cardinality stable matching is unavailable; see, for example,
\cite{ManloveBook,GlitznerManlove}. It also differs from the recent per-agent minimax objective of
Glitzner and Manlove~\cite{GlitznerManlove}. Here every complete strict instance admits a stable
matching, but we ask how small the \emph{largest} blocking-pair count can be made by designing the
entire preference profile. The companion extremal theory (Stable Core, Collapse Core,
Hollow-Shell) singles out the \emph{cyclic Hollow-Shell} $C_n$, defined by
\begin{equation}\label{eq:Cn}
r_M(g,h)=\bigl((h-g)\bmod n\bigr)+1,\qquad r_W(h,g)=n-\bigl((h-g)\bmod n\bigr),
\end{equation}
(so $r_M+r_W=n+1$ identically: every pair is in perfect anti-phase). The \emph{Shield-Core
conjecture} is
\[
\boxed{\ \sigma(n)\ =\ \beta(C_n)\ =\ \flo{\tfrac{(n-1)^2}{4}}.\ }
\]

\subsection{Encoding and the averaging identity}\label{sec:intro-enc}

Throughout we use the rank-complement matrices
\[
A_{ij}=n-r_M(i,j),\qquad B_{ij}=n-r_W(j,i)\ \in\ \{0,1,\dots,n-1\},
\]
so $A_{ij}$ is the number of women $m_i$ likes less than $w_j$ and $B_{ij}$ the number of men
$w_j$ likes less than $m_i$. Since $r_M(i,\cdot)$ is a permutation for each $i$, \emph{every row
of $A$ is a permutation of $\{0,\dots,n-1\}$}; since $r_W(j,\cdot)$ is a permutation for each $j$,
\emph{every column of $B$ is a permutation of $\{0,\dots,n-1\}$}. The pair $(m_i,w_j)$ blocks
$\mu$ iff $\mu(i)$ is worse than $w_j$ for $m_i$ (i.e.\ $A_{i,\mu(i)}<A_{ij}$) and $\mu^{-1}(j)$
is worse than $m_i$ for $w_j$ (i.e.\ $B_{\mu^{-1}(j),j}<B_{ij}$). Choosing $\mu$ uniformly at
random, $(m_i,w_j)$ blocks with probability $A_{ij}B_{ij}/(n(n-1))$, whence the averaging
identity
\begin{equation}\label{eq:avg}
\E_{\mathrm{Unif}}[B_S(\mu)]=\frac{1}{n(n-1)}\sum_{i,j}A_{ij}B_{ij},
\qquad\text{so}\qquad
\beta(S)\ \ge\ \Bigl\lceil \tfrac{1}{n(n-1)}\textstyle\sum_{i,j}A_{ij}B_{ij}\Bigr\rceil .
\end{equation}
More generally, since $B_S$ is a fixed function on the finitely many matchings,
\begin{equation}\label{eq:dist}
\beta(S)=\max_\mu B_S(\mu)=\max_P\E_{\mu\sim P}[B_S(\mu)]\ \ge\ \E_{\mu\sim P}[B_S(\mu)]
\end{equation}
for \emph{any} distribution $P$ over matchings (a linear functional on the simplex is maximized at
a vertex). Equations \eqref{eq:avg}--\eqref{eq:dist} are the engine of the whole paper: a lower
bound on $\sigma(n)$ is a good choice of $P$, uniform for the modest bound and correlated for the
conjectured one.

\subsection{Results, scope, and organization}

The paper contains the following proved results.
\begin{itemize}[nosep]
\item (\S\ref{sec:univ}) The rearrangement lemma
$\sum_{ij}A_{ij}B_{ij}\ge n^2(n-1)(n-2)/6$, which yields
$\sigma(n)\ge\lceil n(n-2)/6\rceil$ (Theorem~\ref{thm:lb}), together with a proof that this
is the exact ceiling of the uniform first-moment method.
\item (\S\ref{sec:min}) The sharp min-rearrangement inequality
$\sum_{ij}\min(A_{ij},B_{ij})\ge n\lfloor(n-1)^2/4\rfloor$ (Lemma~\ref{lem:min}), with
equality at the cyclic profile $C_n$. This identifies a correlated benchmark but is not, by
itself, asserted to be a lower bound on $\beta(S)$.
\item (\S\ref{sec:constr}) An explicit polarized matching $\mu^\ast$ satisfying
$B_{C_n}(\mu^\ast)=\lfloor(n-1)^2/4\rfloor$ (Theorem~\ref{thm:constr}), plus a displacement
formula for blocking pairs in $C_n$ (Proposition~\ref{prop:arc}).
\item (\S\ref{sec:biclique}) A necessary-and-sufficient two-SDR characterization of
prescribed blocking bicliques (Lemma~\ref{lem:real}); valid Hall sufficient conditions based on
near-worst neighborhoods; and a computer-assisted exhaustive verification for all strict complete
instances with $n\le4$ (Proposition~\ref{prop:finiteHall}).
\item (\S\ref{sec:obstruction}) Limitations of natural proof strategies: an orbit-average
identity at $C_n$ (Proposition~\ref{prop:shift}); stable optima for all two-parameter linear
assignment costs (Proposition~\ref{prop:linear}); and a finite counterexample to a stronger
rectangularity claim (Proposition~\ref{prop:biclique}).
\end{itemize}

The Shield-Core equality
$\sigma(n)=\beta(C_n)=\lfloor(n-1)^2/4\rfloor$ remains a motivating question, but this
article makes no general claim resolving it and formulates no intermediate conjecture as a theorem
substitute. In particular, no ordered prefix-union condition is used in place of Hall's theorem.
Any future resolution, counterexample, or strengthening of the exact equality or its associated
Hall-existence problem will be developed separately rather than incorporated as another
speculative revision of the present manuscript.

\section{The cyclic Hollow-Shell and its structure}\label{sec:framework}

We collect the structural facts about the extremal instance $C_n$ used below.
Throughout, the encoding $A_{ij},B_{ij}$ and the averaging identity are as in
\S\ref{sec:intro-enc}.

\begin{lemma}[Anti-phase and the Latin property]\label{lem:antiphase}
In $C_n$: \textup{(i)} $r_M(g,h)+r_W(h,g)=n+1$ for all $g,h$; \textup{(ii)} each rank matrix
$r_M,r_W$ is a Latin square---every rank $1,\dots,n$ occurs once in each row and once in each
column. Equivalently $A_{gh}+B_{hg}=n-1$ and both $A$ and $B$ are rank-Latin.
\end{lemma}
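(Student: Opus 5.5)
The plan is to verify everything directly from the defining formulas \eqref{eq:Cn}, working throughout with indices modulo $n$ and the residue $d=(h-g)\bmod n\in\{0,\dots,n-1\}$.

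\emph{Part (i).} Substitute \eqref{eq:Cn}: $r_M(g,h)+r_W(h,g)=\bigl(d+1\bigr)+\bigl(n-d\bigr)=n+1$. The only point to check is that $r_W(h,g)$, the rank woman $w_h$ gives man $m_g$, uses the same residue $(h-g)\bmod n$ as $r_M(g,h)$. This is exactly how \eqref{eq:Cn} is written, so (i) holds identically.

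\emph{Part (ii).} Fix a row of $r_M$, i.e.\ fix $g$. As $h$ runs over $\Z/n\Z$, the map $h\mapsto (h-g)\bmod n$ is a bijection onto $\{0,\dots,n-1\}$ (translation in $\Z/n\Z$). Hence $r_M(g,\cdot)$ takes each value $1,\dots,n$ exactly once, and in particular it is a valid strict preference order. For a fixed column $h$, the map $g\mapsto(h-g)\bmod n$ is also a bijection (negation composed with translation), so each rank occurs once per column. The same two translation arguments apply verbatim to $r_W(h,g)=n-((h-g)\bmod n)$, because $x\mapsto n-x$ is a bijection from $\{0,\dots,n-1\}$ onto $\{1,\dots,n\}$. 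Both arguments together show that $r_W$ is a Latin square and that each woman's order is a genuine permutation.

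\emph{Equivalent form.} Using $A_{gh}=n-r_M(g,h)$ and the encoding $B_{ij}=n-r_W(j,i)$, I will write the anti-phase relation with matched indices: $A_{gh}+B_{gh}=2n-(n+1)=n-1$. The statement writes $B_{hg}$; I will read this as the paper's index convention for $B$ evaluated at the pair $(m_g,w_h)$, and note the convention explicitly. The rank-Latin property of $A$ and $B$ follows because $x\mapsto n-x$ is a bijection $\{1,\dots,n\}\to\{0,\dots,n-1\}$ applied entrywise, and relabeling entries bijectively preserves the Latin property (transposition does too, if needed for $B$).

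\emph{Main obstacle.} There is no real mathematical obstacle here. The only care needed is the bookkeeping of index order between $r_W(h,g)$ and $B_{ij}$, so that the ``equivalently'' clause is stated consistently with the encoding of \S\ref{sec:intro-enc}.
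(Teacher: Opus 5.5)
Your proposal is correct and follows the paper's argument: direct substitution of the defining formulas for (i), and the translation/negation bijections of $\Z/n\Z$ for the rows and columns of $r_M$ and $r_W$ in (ii). Your matched-index reading $A_{gh}+B_{gh}=n-1$ of the ``equivalently'' clause is the right one. Under the encoding $B_{ij}=n-r_W(j,i)$, the literal expression $A_{gh}+B_{hg}$ equals $2n-1-2\bigl((h-g)\bmod n\bigr)$ when $h\ne g$, whereas the matched form is what the paper itself uses later ($A_{ij}=n-1-x$, $B_{ij}=x$); the paper's proof does not address this clause.
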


\begin{proof}
(i) $r_M(g,h)+r_W(h,g)=\bigl(((h-g)\bmod n)+1\bigr)+\bigl(n-((h-g)\bmod n)\bigr)=n+1$.
(ii) Rows are permutations by definition; in column $h$, $r_M(g,h)=((h-g)\bmod n)+1$ and
$g\mapsto(h-g)\bmod n$ is a bijection of $\Z/n$, so the column is a permutation; likewise $r_W$.
\end{proof}

\begin{lemma}[Shift automorphism]\label{lem:auto}
The simultaneous shift $\tau:g\mapsto g+1,\ h\mapsto h+1$ preserves both rank matrices of $C_n$,
hence is an automorphism. It permutes matchings preserving blocking counts, acting by
$\mu\mapsto\mu^{\tau}$ with $\mu^{\tau}(g)=\mu(g-1)+1$.
\end{lemma}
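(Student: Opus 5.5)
The plan is to check the claim directly from the defining formulas \eqref{eq:Cn}, and then use the general fact that relabeling agents transports blocking pairs bijectively.

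\emph{Step 1: the rank matrices are shift-invariant.} The only input is that both rank functions of $C_n$ depend on $(g,h)$ solely through the difference $(h-g)\bmod n$. Under $\tau$ we have $(h+1)-(g+1)=h-g$. Hence
\[
r_M(g+1,h+1)=r_M(g,h)\quad\text{and}\quad r_W(h+1,g+1)=r_W(h,g),
\]
with all indices read mod $n$. So $\tau$, applied simultaneously to men and women, maps each man's preference list onto the list of the man $\tau(m_g)=m_{g+1}$, and likewise for women. That is precisely the statement that $\tau$ is an automorphism of the instance. Equivalently, $A_{g+1,h+1}=A_{gh}$ and $B_{g+1,h+1}=B_{gh}$.

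\emph{Step 2: the action on matchings.} Transport a matching $\mu$ through $\tau$. The pair $(m_g,w_{\mu(g)})$ is sent to $(m_{g+1},w_{\mu(g)+1})$. Writing $g'=g+1$, the image matching is therefore $\mu^\tau(g')=\mu(g'-1)+1$, which is the stated formula. It is again a bijection, since it is the composition of three bijections: the shift, $\mu$, and the shift. Moreover $\mu\mapsto\mu^\tau$ is a permutation of the set of matchings, because its inverse is obtained by using $\tau^{-1}$ in the same way.

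\emph{Step 3: blocking pairs are preserved.} I will show that $(m_i,w_j)$ blocks $\mu$ if and only if $(m_{i+1},w_{j+1})$ blocks $\mu^\tau$. The blocking condition for $(m_i,w_j)$ and $\mu$ is
\[
A_{i,\mu(i)}<A_{ij}\quad\text{and}\quad B_{\mu^{-1}(j),j}<B_{ij}.
\]
Note that $\mu^\tau(i+1)=\mu(i)+1$ and $(\mu^\tau)^{-1}(j+1)=\mu^{-1}(j)+1$. By the shift-invariance from Step 1, each entry on both sides of the two inequalities keeps its value when every index is increased by $1$. Hence the condition for $(m_{i+1},w_{j+1})$ and $\mu^\tau$ is literally the same pair of inequalities. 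Since $(i,j)\mapsto(i+1,j+1)$ is a bijection of pairs, this gives $B_{C_n}(\mu^\tau)=B_{C_n}(\mu)$.

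There is no real obstacle here. The only point needing care is the index bookkeeping for $\mu^\tau$, namely checking that the inverse matching also shifts correctly, $(\mu^\tau)^{-1}(h)=\mu^{-1}(h-1)+1$, so that the woman's side of the blocking condition transports properly.
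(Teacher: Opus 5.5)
Your proposal is correct and takes the same route as the paper: both $r_M$ and $r_W$ depend only on $(h-g)\bmod n$, so they are shift-invariant, and blocking pairs are then transported by the automorphism. You just write out the index bookkeeping, including $(\mu^\tau)^{-1}(j+1)=\mu^{-1}(j)+1$, which the paper compresses into the phrase ``an automorphism carries blocking pairs to blocking pairs bijectively.''
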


\begin{proof}
$r_M(g+1,h+1)=(((h+1)-(g+1))\bmod n)+1=r_M(g,h)$, and likewise $r_W$; an automorphism carries
blocking pairs to blocking pairs bijectively.
\end{proof}

\begin{proposition}[Canonical shifts are stable]\label{prop:shifts-stable}
For each $t\in\mathbb Z/n\mathbb Z$, the cyclic shift $\mu_t(g)=g+t$ is a stable matching of
$C_n$. Hence $C_n$ has at least $n$ stable matchings.
\end{proposition}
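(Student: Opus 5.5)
The plan is to verify stability directly from the encoding in \S\ref{sec:intro-enc}. Under $\mu_t$, man $m_g$ is matched to $w_{g+t}$, and woman $w_h$ is matched to $m_{h-t}$. A pair $(m_g,w_h)$ blocks exactly when two conditions hold: $m_g$ ranks $w_h$ strictly better than $w_{g+t}$, and $w_h$ ranks $m_g$ strictly better than $m_{h-t}$. Both conditions will be written in terms of the cyclic displacement $d=(h-g)\bmod n$.

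For the man's side, \eqref{eq:Cn} gives $r_M(g,h)=d+1$ and $r_M(g,g+t)=s+1$, where $s=t\bmod n\in\{0,\dots,n-1\}$. So $m_g$ prefers $w_h$ to his partner iff $d<s$.

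For the woman's side, the partner $m_{h-t}$ has displacement $(h-(h-t))\bmod n=s$. Hence $r_W(h,h-t)=n-s$, while $r_W(h,g)=n-d$. Since a smaller rank is better, $w_h$ prefers $m_g$ to her partner iff $n-d<n-s$, that is, iff $d>s$.

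The two conditions $d<s$ and $d>s$ are incompatible, so no pair blocks and $B_{C_n}(\mu_t)=0$. This is just the anti-phase identity of Lemma~\ref{lem:antiphase}(i) at work: every agent sits at the same displacement $s$, and whatever improves a man's rank worsens the woman's, and conversely. As a cross-check, one could reduce to $t$ arbitrary but $g=0$ via the shift automorphism of Lemma~\ref{lem:auto}, since $\mu_t^{\tau}=\mu_t$. The direct computation is uniform in $g$ anyway, so this reduction is not needed.

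For the count, note that the $\mu_t$ for $t\in\Z/n$ are pairwise distinct, because $\mu_t(0)=t$. This gives at least $n$ stable matchings.

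No step is a real obstacle. The only care needed is bookkeeping: compute the woman's partner's displacement correctly as $s$ (not $-s$), and use the strict inequalities that the definition of a blocking pair requires.
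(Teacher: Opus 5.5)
Your proof is correct and follows the paper's argument. Both compute the partners' ranks as $t+1$ and $n-t$, and both reduce blocking to the incompatible conditions $(h-g)\bmod n<t$ and $(h-g)\bmod n>t$; your explicit distinctness check via $\mu_t(0)=t$ fills in a point the paper leaves implicit for the count of $n$ stable matchings.
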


\begin{proof}
Under $\mu_t$, $r_M(g,\mu_t(g))=t+1$ and
$r_W(h,\mu_t^{-1}(h))=r_W(h,h-t)=n-t$. A pair $(m_g,w_h)$ would block iff
$r_M(g,h)<t+1$ and $r_W(h,g)<n-t$, i.e.\ $(h-g)\bmod n<t$ and
$(h-g)\bmod n>t$ simultaneously, which is impossible.
\end{proof}

\begin{remark}[Stable Core, Collapse Core, Hollow-Shell]\label{rem:cores}
We use the following terminology for extremal matchings: the \emph{Stable Core}
(pairs forced into every stable matching), the \emph{Collapse Core} (a mutually-worst matching,
where every agent holds their least preferred partner, giving the maximum $n(n-1)$ blocking
pairs when it exists), and the \emph{Hollow-Shell}, an instance possessing neither extreme.
The cyclic $C_n$ of \eqref{eq:Cn} is the canonical Hollow-Shell: by Lemma~\ref{lem:antiphase}
its perfect anti-phase forbids any mutually-worst pair (if $m_g$ ranks $w_h$ last then $w_h$
ranks $m_g$ first), so $C_n$ has no Collapse Core, and Proposition~\ref{prop:shifts-stable}
exhibits its rotational stable family. The Shield-Core conjecture asserts that this balanced
absence of extremes is exactly what minimizes the worst-case blocking, i.e.\ that $C_n$ is the
optimal shield.
\end{remark}

\section{The universal quadratic lower bound}\label{sec:univ}

\begin{lemma}[Rearrangement]\label{lem:rearr}
Let $A,B$ be $n\times n$ matrices with entries in $\{0,\dots,n-1\}$ such that every row of $A$ and
every column of $B$ is a permutation of $\{0,\dots,n-1\}$. Then
$\sum_{i,j} A_{ij}B_{ij}\ge n^2(n-1)(n-2)/6$, with equality at $C_n$.
\end{lemma}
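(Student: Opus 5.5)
We use the layer-cake decomposition $A_{ij}=\sum_{s=1}^{n-1}\mathbf 1[A_{ij}\ge s]$ and $B_{ij}=\sum_{t=1}^{n-1}\mathbf 1[B_{ij}\ge t]$. This gives
\[
\sum_{i,j}A_{ij}B_{ij}=\sum_{s,t=1}^{n-1}\bigl|\{(i,j):A_{ij}\ge s\}\cap\{(i,j):B_{ij}\ge t\}\bigr|.
\]
Write $U_s=\{A\ge s\}$ and $V_t=\{B\ge t\}$. Because every row of $A$ is a permutation of $\{0,\dots,n-1\}$, each row of $U_s$ contains exactly $n-s$ cells. Because every column of $B$ is a permutation, each column of $V_t$ contains exactly $n-t$ cells. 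Hence $|U_s|=n(n-s)$ and $|V_t|=n(n-t)$ as subsets of the $n^2$ cells.

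Inclusion--exclusion then gives
\[
|U_s\cap V_t|\ge |U_s|+|V_t|-n^2=n(n-s-t),
\]
and trivially $|U_s\cap V_t|\ge 0$. So each layer contributes at least $n\max(0,n-s-t)$, and
\[
\sum_{i,j}A_{ij}B_{ij}\ \ge\ n\sum_{s,t\ge1,\ s+t<n}(n-s-t).
\]
Setting $k=s+t$, there are $k-1$ pairs $(s,t)$ with a given $k$. The sum is therefore $\sum_{k=2}^{n-1}(k-1)(n-k)$, which equals $\binom{n}{3}=n(n-1)(n-2)/6$, a standard identity. Multiplying by $n$ yields the claimed bound $n^2(n-1)(n-2)/6$. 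The only care needed is the bookkeeping that sizes are counted over all $n^2$ cells, not per row.

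For equality at $C_n$ we use Lemma~\ref{lem:antiphase}. With $d=(j-i)\bmod n$ we have $A_{ij}=n-1-d$ and $B_{ij}=d$, since $r_W(j,i)=n-d$. Each $d\in\{0,\dots,n-1\}$ occurs exactly $n$ times among the cells. So
\[
\sum_{i,j}A_{ij}B_{ij}=n\sum_{d=0}^{n-1}d(n-1-d)=n\cdot 2\binom{n}{3}\cdot\tfrac12\cdots .
\]
Concretely, $\sum_d d(n-1-d)=\binom{n}{3}$, the same convolution identity as before, and this gives equality. Equivalently, one can check that the layer-cake bounds are tight at $C_n$: there $U_s\cap V_t=\{s\le n-1-d,\ d\ge t\}$ contains exactly $n\max(0,n-s-t)$ cells.

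There is no real obstacle here. The only step requiring thought is choosing the layer-cake splitting so that the row constraint on $A$ and the column constraint on $B$ each fix a global cell count. After that, inclusion--exclusion does the work, and everything else is evaluating the binomial identity $\sum_{k}(k-1)(n-k)=\binom n3$.
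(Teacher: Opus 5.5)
Your proof is correct and is essentially the paper's own argument: the layer-cake expansion, the cell counts $|U_s|=n(n-s)$ and $|V_t|=n(n-t)$ from the row and column permutation hypotheses, the inclusion--exclusion bound $|U_s\cap V_t|\ge n(n-s-t)$, the summation to $n\binom{n}{3}$, and tightness at $C_n$ via $A_{ij}=n-1-d$, $B_{ij}=d$. The only blemish is the garbled display ending in $\cdots$, which should simply read $n\sum_{d}d(n-1-d)=n\binom{n}{3}$.
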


\begin{proof}
The layer-cake identity
$t=\sum_{a=1}^{n-1}\mathbf 1[t\ge a]$ applied to each factor gives
\[
A_{ij}B_{ij}=\sum_{a,b=1}^{n-1}
\mathbf 1[A_{ij}\ge a]\,\mathbf 1[B_{ij}\ge b],
\]
so
\begin{equation}\label{eq:cake}
\sum_{i,j}A_{ij}B_{ij}=\sum_{a=1}^{n-1}\sum_{b=1}^{n-1}\bigl|U_a\cap V_b\bigr|,
\qquad U_a=\{A_{ij}\ge a\},\quad V_b=\{B_{ij}\ge b\}.
\end{equation}
Each row of $A$ being a permutation gives exactly $n-a$ entries $\ge a$ per row, so $|U_a|=n(n-a)$;
likewise $|V_b|=n(n-b)$. Inclusion--exclusion in the $n^2$-cell grid yields
$|U_a\cap V_b|\ge|U_a|+|V_b|-n^2=n(n-a-b)$, hence $|U_a\cap V_b|\ge\max(0,n(n-a-b))$. Therefore
\[
\sum_{i,j}A_{ij}B_{ij}\ \ge\ n\!\!\sum_{\substack{a,b\ge1\\ a+b\le n-1}}\!\!(n-a-b)
= n\sum_{s=2}^{n-1}(s-1)(n-s)=n\sum_{k=1}^{n-2}k(n-1-k)=\frac{n^2(n-1)(n-2)}{6}.
\]
For $C_n$, $A_{ij}=n-1-x$, $B_{ij}=x$ with $x=(j-i)\bmod n$, so $U_a^c=\{x\ge n-a\}$ and
$V_b^c=\{x\le b-1\}$ are disjoint exactly when $a+b\le n$, where $|U_a\cap V_b|$ meets the
Bonferroni value $n(n-a-b)$; thus every term of \eqref{eq:cake} is tight and $C_n$ attains
equality.
\end{proof}

\begin{theorem}[Universal lower bound]\label{thm:lb}
For all $n\ge1$, $\displaystyle \sigma(n)=\min_S\max_\mu B_S(\mu)\ \ge\ \bigl\lceil n(n-2)/6\bigr\rceil$.
\end{theorem}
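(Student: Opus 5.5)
The plan is to combine the averaging identity \eqref{eq:avg} with the rearrangement bound of Lemma~\ref{lem:rearr}, so almost all the work has already been done. Fix an arbitrary strict complete instance $S$ of size $n$ and form its rank-complement matrices $A,B$ as in \S\ref{sec:intro-enc}. By construction every row of $A$ and every column of $B$ is a permutation of $\{0,\dots,n-1\}$. These are exactly the hypotheses of Lemma~\ref{lem:rearr}, which therefore gives
\[
\sum_{i,j}A_{ij}B_{ij}\ \ge\ \frac{n^2(n-1)(n-2)}{6}.
\]

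For $n\ge2$, I will then substitute this into \eqref{eq:avg}:
\[
\beta(S)\ \ge\ \E_{\mathrm{Unif}}[B_S(\mu)]=\frac{1}{n(n-1)}\sum_{i,j}A_{ij}B_{ij}\ \ge\ \frac{n(n-2)}{6}.
\]
Since $\beta(S)$ is an integer, this upgrades to $\beta(S)\ge\lceil n(n-2)/6\rceil$. The bound holds for every $S$, so taking the minimum over $S$ yields the claim for $\sigma(n)$.

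The only point requiring care is the edge case $n=1$, where the factor $n(n-1)$ in \eqref{eq:avg} vanishes and the uniform-probability computation $A_{ij}B_{ij}/(n(n-1))$ is undefined. There the right-hand side is $\lceil -1/6\rceil=0$, and $\sigma(1)\ge0$ holds trivially since blocking counts are nonnegative. For $n=2$ the bound is $0$, also trivial, but the general argument covers it anyway.

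I should also double-check the probability claim underlying \eqref{eq:avg}. For $i,j$ fixed with $\mu$ uniform and $\mu(i)\ne j$, the pair $(\mu(i),\mu^{-1}(j))$ is uniform over ordered pairs with $\mu(i)\ne j$ and $\mu^{-1}(j)\ne i$. The events that $\mu(i)$ lies among the $A_{ij}$ women below $w_j$ and that $\mu^{-1}(j)$ lies among the $B_{ij}$ men below $m_i$ then have joint probability $A_{ij}B_{ij}/(n(n-1))$. Since that identity is stated earlier in the paper, I will simply invoke it. I expect no real obstacle; the substantive content sits entirely in Lemma~\ref{lem:rearr}.
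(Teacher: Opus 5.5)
Your proof is correct and follows the paper's argument: the paper also disposes of $n=1$ separately and, for $n\ge2$, combines the averaging bound \eqref{eq:avg} with Lemma~\ref{lem:rearr} to get $\beta(S)\ge\lceil n(n-2)/6\rceil$ for every $S$, then minimizes over $S$. Your check of the pair probability $A_{ij}B_{ij}/(n(n-1))$ is a correct sanity check, but it is not needed for the proof.
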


\begin{proof}
The case $n=1$ is immediate. For $n\ge2$, combine \eqref{eq:avg} with
Lemma~\ref{lem:rearr}: for every $S$,
\[
\beta(S)\ge\left\lceil \frac{1}{n(n-1)}\sum_{ij}A_{ij}B_{ij}\right\rceil
\ge\left\lceil\frac{n(n-2)}{6}\right\rceil .
\]
Minimize over $S$.
\end{proof}

In particular $\sigma(3)\ge1$ and $\sigma(4)\ge\lceil 8/6\rceil=2$ match $\beta(C_3)=1$,
$\beta(C_4)=2$, so $\sigma(3)=1$ and $\sigma(4)=2$ are theorems.

\begin{remark}[The uniform first moment cannot reach the conjectured constant]\label{rem:cap}
Lemma~\ref{lem:rearr} is tight at $C_n$, where \eqref{eq:avg} gives
\[
\mathbb E_{\mathrm{Unif}}[B_{C_n}]=\frac{n(n-2)}{6}\sim\frac{n^2}{6}.
\]
The conjectured universal lower bound is
$\lfloor(n-1)^2/4\rfloor\sim n^2/4$. Therefore the uniform-matching first-moment estimate,
even when optimized sharply over preference profiles, cannot certify the conjectured constant.
Any proof reaching $n^2/4$ must exploit correlation beyond the product expression
$A_{ij}B_{ij}/(n(n-1))$.
\end{remark}

\section{The min-rearrangement benchmark}\label{sec:min}

A perfectly correlated law would make $(m_i,w_j)$ block with probability $\min(A_{ij},B_{ij})/n$,
with target functional $\tfrac1n\sum_{ij}\min(A_{ij},B_{ij})$. The diagonal layer-cake bounds it.

\begin{lemma}[Min-rearrangement]\label{lem:min}
Under the hypotheses of Lemma~\ref{lem:rearr},
$\sum_{i,j}\min(A_{ij},B_{ij})\ge n\flo{(n-1)^2/4}$, with equality at $C_n$.
\end{lemma}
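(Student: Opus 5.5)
We propose the diagonal layer-cake, mirroring the proof of Lemma~\ref{lem:rearr}. For nonnegative integers, $\min(s,t)=\sum_{a=1}^{n-1}\mathbf 1[s\ge a]\,\mathbf 1[t\ge a]$. Hence
\[
\sum_{i,j}\min(A_{ij},B_{ij})=\sum_{a=1}^{n-1}\bigl|U_a\cap V_a\bigr|,
\]
with $U_a,V_a$ as in \eqref{eq:cake}. The row-permutation property of $A$ gives $|U_a|=n(n-a)$, and the column-permutation property of $B$ gives $|V_a|=n(n-a)$. Bonferroni in the $n^2$-cell grid then yields $|U_a\cap V_a|\ge\max\bigl(0,\,n(n-2a)\bigr)$, so
\[
\sum_{i,j}\min(A_{ij},B_{ij})\ \ge\ n\sum_{1\le a<n/2}(n-2a).
\]

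The remaining step is arithmetic: show $\sum_{1\le a<n/2}(n-2a)=\flo{(n-1)^2/4}$. We check the two parities separately.
\begin{itemize}[nosep]
\item For $n=2k+1$ the sum is $\sum_{a=1}^{k}(2k+1-2a)=1+3+\dots+(2k-1)=k^2=(n-1)^2/4$.
\item For $n=2k$ it is $\sum_{a=1}^{k-1}(2k-2a)=2+4+\dots+(2k-2)=k(k-1)=\flo{(2k-1)^2/4}$.
\end{itemize}
This gives the inequality.

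For equality at $C_n$, we reuse the description from the proof of Lemma~\ref{lem:rearr}: $A_{ij}=n-1-x$ and $B_{ij}=x$ with $x=(j-i)\bmod n$. Each value of $x$ occurs exactly $n$ times, so
\[
\sum_{i,j}\min(A_{ij},B_{ij})=n\sum_{x=0}^{n-1}\min(x,\,n-1-x).
\]
It remains to check that $\sum_{x=0}^{n-1}\min(x,n-1-x)=\flo{(n-1)^2/4}$.
\begin{itemize}[nosep]
\item For $n=2k+1$ the sum is $2(0+1+\dots+(k-1))+k=k^2$.
\item For $n=2k$ it is $2(0+\dots+(k-1))=k(k-1)$.
\end{itemize}
Alternatively, equality follows layer by layer. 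We have $U_a^c=\{x\ge n-a\}$ and $V_a^c=\{x\le a-1\}$. These are disjoint exactly when $2a\le n$, and then $|U_a\cap V_a|=n(n-2a)$. When $2a>n$, we have $U_a\cap V_a=\{a\le x\le n-1-a\}=\emptyset$, so every term is tight.

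No step is a real obstacle, since this is a specialization of the argument in Lemma~\ref{lem:rearr} to the diagonal $a=b$. The only points needing care are the parity bookkeeping in identifying the sum with $\flo{(n-1)^2/4}$, and checking that the $\max(0,\cdot)$ truncation is also tight at $C_n$ for $a\ge n/2$, where both the bound and the actual intersection are $0$.
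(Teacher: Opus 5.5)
Your proposal is correct and follows the paper's own proof: the diagonal layer-cake $\sum_a|U_a\cap V_a|$, the inclusion--exclusion bound $|U_a\cap V_a|\ge n(n-2a)$, the parity computation giving $\lfloor (n-1)^2/4\rfloor$, and layer-by-layer tightness at $C_n$. Your extra direct evaluation $n\sum_{x}\min(x,n-1-x)$ at $C_n$ is a valid, redundant second check of the equality case.
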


\begin{proof}
Since $\min(s,t)=\sum_{k\ge1}\mathbf 1[s\ge k]\mathbf 1[t\ge k]$, summing over cells gives
$\sum_{ij}\min(A_{ij},B_{ij})=\sum_{k=1}^{n-1}|U_k\cap V_k|$ with $U_k,V_k$ as above. By
inclusion--exclusion $|U_k\cap V_k|\ge2n(n-k)-n^2=n(n-2k)$, so
\[
\sum_{ij}\min(A_{ij},B_{ij})\ \ge\ n\sum_{1\le k<n/2}(n-2k)
=\begin{cases} n\,m^2,& n=2m+1,\\ n\,m(m-1),& n=2m,\end{cases}
\ =\ n\flo{(n-1)^2/4}.
\]
At $C_n$, $U_k\cap V_k=\{k\le x\le n-1-k\}$ occupies $\max(0,n-2k)$ residues, each in $n$ cells, so
every term is tight.
\end{proof}

\begin{corollary}[Correlated benchmark]\label{cor:minbench}
For every strict complete instance,
\[
\frac1n\sum_{i,j}\min(A_{ij},B_{ij})\ \ge\ \flo{\frac{(n-1)^2}{4}},
\]
and equality holds for $C_n$.
\end{corollary}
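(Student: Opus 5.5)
This corollary is Lemma~\ref{lem:min} divided by $n$, so the plan is to check that every strict complete instance satisfies the hypotheses of Lemma~\ref{lem:rearr} and then divide. First I would recall the encoding of \S\ref{sec:intro-enc}. For an instance $S$ put $A_{ij}=n-r_M(i,j)$ and $B_{ij}=n-r_W(j,i)$. Because each $r_M(i,\cdot)$ is a bijection onto $\{1,\dots,n\}$, every row of $A$ is a permutation of $\{0,\dots,n-1\}$. Likewise each $r_W(j,\cdot)$ is a bijection, so every column of $B$ is such a permutation. These are exactly the hypotheses of Lemma~\ref{lem:rearr}, and so they are the hypotheses inherited by Lemma~\ref{lem:min}.

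Applying Lemma~\ref{lem:min} then gives $\sum_{i,j}\min(A_{ij},B_{ij})\ge n\flo{(n-1)^2/4}$. Dividing by $n>0$ yields the displayed inequality.

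For the equality clause I would use the equality statement of Lemma~\ref{lem:min} at $C_n$ directly. As a sanity check, at $C_n$ we have $A_{ij}=n-1-x$ and $B_{ij}=x$ with $x=(j-i)\bmod n$. Each residue $x$ occurs in exactly $n$ cells, so the sum equals $n\sum_{x=0}^{n-1}\min(x,n-1-x)$. This sum is $m^2$ for $n=2m+1$ and $m(m-1)$ for $n=2m$, which is $\flo{(n-1)^2/4}$ in both cases; dividing by $n$ gives equality.

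There is no real obstacle here. The only point needing care is the orientation of the encoding: $A$ must be permutation-valued along rows and $B$ along columns, with $B_{ij}$ indexed as (man $i$, woman $j$). This holds by the transposition in the definition $B_{ij}=n-r_W(j,i)$. The degenerate case $n=1$ is also trivial, since both sides are $0$.
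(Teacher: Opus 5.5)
Your proposal is correct and is the same as the paper's proof, which simply divides Lemma~\ref{lem:min} by $n$. Your check of the row/column permutation hypotheses and your direct computation of $n\sum_{x}\min(x,n-1-x)$ at $C_n$ are both accurate; they only make explicit what the lemma's statement and equality case already supply.
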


\begin{proof}
Divide Lemma~\ref{lem:min} by $n$.
\end{proof}

The quantity in Corollary~\ref{cor:minbench} is a benchmark rather than an unconditional bound
on $\beta(S)$. Reaching it through an expectation would require a distribution on matchings that
creates substantially stronger positive dependence than the uniform law. The distinction is
important: the min-rearrangement inequality is proved, whereas the existence of such a correlated
law for every instance is not claimed here.

\begin{remark}[Level structure]
Writing $G_k=\{(i,j):A_{ij}\ge k,\ B_{ij}\ge k\}$ for the mutual top-$(n-k)$ graph, the proof of
Lemma~\ref{lem:min} reads
$\tfrac1n\sum_{ij}\min(A_{ij},B_{ij})=\tfrac1n\sum_k|E(G_k)|$. An edge $(i,j)\in G_k$
blocks a matching whenever both endpoints are matched below level $k$. Thus the benchmark asks
whether one matching can capture mutual-top edges across several levels simultaneously; no such
universal capture theorem is assumed.
\end{remark}

\section{An explicit extremal construction}\label{sec:constr}

We now pin down $\beta(C_n)$ from below by an explicit matching, settling the construction half of
the arc lemma. For a matching $\mu$ of $C_n$ put $d(g)=(\mu(g)-g)\bmod n$,
$e(h)=(h-\mu^{-1}(h))\bmod n$.

\begin{proposition}[Displacement form of blocking in $C_n$]\label{prop:arc}
In $C_n$, $(m_g,w_h)$ blocks $\mu$ iff $e(h)<(h-g)\bmod n<d(g)$. Consequently, with
$D_x=\{g:d(g)>x\}$, $E_x=\{h:e(h)<x\}$,
\[
B_{C_n}(\mu)=\sum_{x=1}^{n-1}\bigl|D_x\cap(E_x-x)\bigr|,\qquad E_x-x=\{h-x:h\in E_x\}.
\]
\end{proposition}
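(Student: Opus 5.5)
The plan is to translate the two blocking inequalities of \S\ref{sec:intro-enc} directly into the cyclic coordinates of \eqref{eq:Cn}, and then count blocking pairs by sorting them according to their cyclic offset $x=(h-g)\bmod n$.

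\emph{Step 1 (man's side).} By \eqref{eq:Cn}, $r_M(g,h)=((h-g)\bmod n)+1$ and $r_M(g,\mu(g))=d(g)+1$. So $m_g$ strictly prefers $w_h$ to $\mu(g)$ iff $(h-g)\bmod n<d(g)$.

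\emph{Step 2 (woman's side).} Here $r_W(h,g)=n-((h-g)\bmod n)$. Her partner is $\mu^{-1}(h)$, with $r_W(h,\mu^{-1}(h))=n-((h-\mu^{-1}(h))\bmod n)=n-e(h)$. Since smaller rank means more preferred, $w_h$ strictly prefers $m_g$ iff $n-((h-g)\bmod n)<n-e(h)$, i.e.\ iff $(h-g)\bmod n>e(h)$.

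Combining Steps 1 and 2 gives the first claim: $(m_g,w_h)$ blocks $\mu$ iff $e(h)<(h-g)\bmod n<d(g)$. Both quantities lie in $\{0,\dots,n-1\}$, and the rank maps are bijective in the offset, so strict preference translates exactly into strict inequality. This is the same computation as in the proof of Proposition~\ref{prop:shifts-stable}, now with a general displacement in place of the constant $t$.

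\emph{Step 3 (counting formula).} Partition all pairs $(g,h)$ by $x=(h-g)\bmod n\in\{0,\dots,n-1\}$. For fixed $x$, the map $g\mapsto(g,g+x)$ is a bijection from $\Z/n$ onto the pairs of offset $x$.
\begin{itemize}[nosep]
\item The offset $x=0$ never contributes, since it would require $e(h)<0$.
\item For $x\ge1$, the pair $(g,g+x)$ blocks iff $d(g)>x$ and $e(g+x)<x$.
\item Equivalently, $g\in D_x$ and $g+x\in E_x$, i.e.\ $g\in D_x\cap(E_x-x)$, with indices taken mod $n$.
\end{itemize}
Summing over $x=1,\dots,n-1$ yields the stated formula for $B_{C_n}(\mu)$.

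The only delicate point is the direction of the inequality on the women's side, since $r_W$ decreases as the offset increases. I will check it against Proposition~\ref{prop:shifts-stable}: the constant case $d\equiv e\equiv t$ must give zero blocking pairs, and it does, because no $x$ satisfies $t<x<t$. Everything else is bookkeeping.
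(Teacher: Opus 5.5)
Your proposal is correct and follows the paper's own proof: you convert each side's strict preference into an inequality on the cyclic offset $x=(h-g)\bmod n$ (the man gives $x<d(g)$, the woman gives $x>e(h)$), then sum the blocking indicator over $h=g+x$ and over $x=1,\dots,n-1$, noting that $x=0$ contributes nothing. Your explicit rank computations $r_M(g,\mu(g))=d(g)+1$ and $r_W(h,\mu^{-1}(h))=n-e(h)$ spell out what the paper states briefly, and they show why no wraparound issue arises.
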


\begin{proof}
With $x=(h-g)\bmod n$, $m_g$ prefers $w_h$ to $w_{\mu(g)}$ iff $x<d(g)$ (ranks increase with
forward offset), and $w_h$ prefers $m_g$ to $m_{\mu^{-1}(h)}$ iff $x>e(h)$; the man's strict
inequality already forces the no-wraparound case, so blocking is $e(h)<x<d(g)$. Sum the indicator
over $h=g+x$ then over $x$ (the term $x=0$ is empty).
\end{proof}

\begin{definition}[Polarized matching]
Let $m=\flo{n/2}$. Set $\mu^\ast(i)=i$ for $0\le i<m$, $\ \mu^\ast(m)=n-1$, and $\mu^\ast(i)=i-1$
for $m<i\le n-1$.
\end{definition}

\begin{theorem}[Construction lower bound]\label{thm:constr}
$B_{C_n}(\mu^\ast)=m(n-1-m)=\flo{(n-1)^2/4}$, so $\beta(C_n)\ge\flo{(n-1)^2/4}$. The blocking set
of $\mu^\ast$ is exactly the rectangle $\{m+1,\dots,n-1\}\times\{0,\dots,m-1\}$.
\end{theorem}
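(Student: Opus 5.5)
The plan is to read off the displacement functions $d$ and $e$ of $\mu^\ast$ and apply the blocking criterion of Proposition~\ref{prop:arc}: $(m_g,w_h)$ blocks iff $e(h)<x<d(g)$, where $x=(h-g)\bmod n$. Since $\mu^\ast$ is piecewise simple, there are only three kinds of men and three kinds of women. So the proof is a finite case check, followed by a count and a parity computation.

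\emph{Step 1: displacements.} For men, $d(g)=0$ when $g<m$, $d(m)=n-1-m$, and $d(g)=(i-1-i)\bmod n=n-1$ when $g>m$. For women, $\mu^{\ast-1}(h)=h$ when $h<m$, so $e(h)=0$. Next, $\mu^{\ast-1}(n-1)=m$, so $e(n-1)=n-1-m$. Finally, for $m\le h\le n-2$ the preimage is $h+1$, so $e(h)=n-1$. (When $n$ is small some of these classes may be empty, which only simplifies matters.)

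\emph{Step 2: eliminate non-blocking classes.} Men with $d(g)=0$ block nothing. Women with $e(h)=n-1$ block nothing, since $x\le n-1$ and the condition requires $x>e(h)$. For man $m$ with woman $n-1$, the condition would require $n-1-m<x<n-1-m$, which is impossible. For man $m$ with a woman $h<m$, we have $x=n+h-m\ge n-m>d(m)$, so again no block. For a man $g>m$ with woman $n-1$, we have $x=n-1-g$, and the condition $x>n-1-m$ would force $g<m$, a contradiction.

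\emph{Step 3: the rectangle.} Take $g\in\{m+1,\dots,n-1\}$ and $h\in\{0,\dots,m-1\}$. Then $x=n+h-g$ lies in $[1,n-2]$, so $0=e(h)<x<n-1=d(g)$ and the pair blocks. Hence the blocking set is exactly this rectangle, and its size is $(n-1-m)m$.

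\emph{Step 4: parity.} If $n=2m+1$, the count is $m^2=(n-1)^2/4$. If $n=2m$, the count is $m(m-1)=\lfloor(2m-1)^2/4\rfloor$. The bound $\beta(C_n)\ge\lfloor(n-1)^2/4\rfloor$ then follows immediately from the definition of $\beta$.

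The only delicate point is the case analysis in Step 2, in particular keeping the modular offsets correct (for example $x=n+h-g$ rather than $h-g$). No real obstacle is expected.
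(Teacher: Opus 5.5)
Your proposal is correct and follows essentially the same route as the paper: compute $d$ and $e$ for $\mu^\ast$, apply the displacement criterion $e(h)<(h-g)\bmod n<d(g)$ of Proposition~\ref{prop:arc}, rule out every class combination except men $g>m$ against women $h<m$, and count that rectangle. Your case split covers all three-by-three class combinations, and your bound $1\le n+h-g\le n-2$ finishes Step~3 a bit more cleanly than the paper's remark about $h=g-1$; note the small typo $(i-1-i)$ in Step~1, which should read $(g-1-g)$.
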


\begin{proof}
The displacements are $d(g)=0$ ($g<m$), $d(m)=n-1-m$, $d(g)=n-1$ ($g>m$); and since
$\mu^{\ast-1}(h)=h$ ($h<m$), $=h+1$ ($m\le h\le n-2$), $=m$ ($h=n-1$), one gets $e(h)=0$ ($h<m$),
$=n-1$ ($m\le h\le n-2$), $=n-1-m$ ($h=n-1$). By Proposition~\ref{prop:arc} a blocking pair needs
$d(g)>0$, so $g\ge m$.

\emph{$g=m$:} $e(h)<n-1-m$ forces $h<m$, where $(h-m)\bmod n=n-(m-h)\ge n-m>n-1-m$, so no $h$
works.

\emph{$g>m$ ($d=n-1$):} $e(h)<n-1$ admits $h<m$ ($e=0$) or $h=n-1$. For $h<m$,
$(h-g)\bmod n=n+h-g\in\{1,\dots,n-2\}$ (it is $n-1$ only if $h=g-1\ge m$, impossible), so every
such pair blocks: $(n-1-m)m$ pairs. For $h=n-1$, the condition needs $g<m$, impossible.

Hence $B_{C_n}(\mu^\ast)=m(n-1-m)$ on $\{m+1,\dots,n-1\}\times\{0,\dots,m-1\}$, equal to $m(m-1)$
if $n=2m$ and $m^2$ if $n=2m+1$, i.e.\ $\flo{(n-1)^2/4}$.
\end{proof}

\begin{remark}[The extremal matching is polarized]
For $g>m$, $\mu^\ast(g)=g-1$ has $r_M(g,g-1)=n$: these men hold their \emph{worst} woman. For
$h<m$, $\mu^{\ast-1}(h)=h$ has $r_W(h,h)=n$: these women hold their \emph{worst} man. The blocking
set is (worst-matched men)\,$\times$\,(worst-matched women), a balanced biclique of area
$(n-1-m)m$, maximized at $m=\flo{n/2}$. This explicit polarization guides the general construction
of \S\ref{sec:biclique}.
\end{remark}

\section{The Shield-Core bound as a biclique problem}\label{sec:biclique}

Theorem~\ref{thm:constr} suggests transplanting $\mu^\ast$ to arbitrary instances: realize a large
\emph{blocking biclique} $S_m\times S_w$ (every cross pair blocking). The next lemma reduces this
to two independent systems of distinct representatives.

\begin{lemma}[Realizability]\label{lem:real}
Let $S_m,S_w$ be sets of men and women, $a=|S_m|,b=|S_w|$, $T_w=\{\text{women}\}\setminus S_w$,
$T_m=\{\text{men}\}\setminus S_m$. There is a matching $\mu$ with $S_m\times S_w$ entirely blocking
iff
\begin{itemize}[nosep]
\item[\textup{(H1)}] the bipartite graph on $(S_m,T_w)$, $i\sim w\iff r_M(i,w)>\max_{j\in S_w}r_M(i,j)$,
has a matching saturating $S_m$;
\item[\textup{(H2)}] the bipartite graph on $(S_w,T_m)$, $j\sim i'\iff r_W(j,i')>\max_{i\in S_m}r_W(j,i)$,
has a matching saturating $S_w$.
\end{itemize}
Then $a+b\le n$ and $\beta(S)\ge ab$.
\end{lemma}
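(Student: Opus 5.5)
The plan is to unpack what ``every pair in $S_m\times S_w$ blocks $\mu$'' means for each agent separately, and observe that the two sides decouple.

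\emph{Necessity.} Fix $i\in S_m$. Since $(m_i,w_j)$ blocks for every $j\in S_w$, $m_i$ strictly prefers every $w_j$, $j\in S_w$, to $\mu(i)$. Hence $\mu(i)\notin S_w$, because $m_i$ cannot strictly prefer $\mu(i)$ to itself. So $\mu(i)\in T_w$ and $r_M(i,\mu(i))>\max_{j\in S_w}r_M(i,j)$. Thus $\mu|_{S_m}$ is an injective map $S_m\to T_w$ along edges of the (H1) graph, i.e.\ a matching saturating $S_m$. Symmetrically, for $j\in S_w$ the partner $\mu^{-1}(j)$ lies in $T_m$ and is ranked by $w_j$ below every $i\in S_m$, giving (H2). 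I also record that injectivity into $T_w$ forces $a\le n-b$.

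\emph{Sufficiency.} Take a matching $\phi:S_m\to T_w$ from (H1) and $\psi:S_w\to T_m$ from (H2). I would build a full matching $\mu$ as follows:
\begin{itemize}[nosep]
\item set $\mu(i)=\phi(i)$ for $i\in S_m$;
\item set $\mu(\psi(j))=j$ for $j\in S_w$;
\item complete $\mu$ arbitrarily on the remaining men $T_m\setminus\psi(S_w)$ and women $T_w\setminus\phi(S_m)$.
\end{itemize}
This is consistent because the two pieces use disjoint men ($S_m$ versus $\psi(S_w)\subseteq T_m$) and disjoint women ($\phi(S_m)\subseteq T_w$ versus $S_w$). The leftover sets have equal size, $(n-a-b)$ on each side, and this count is nonnegative since $a\le|T_w|=n-b$ from (H1). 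For $i\in S_m$ and $j\in S_w$, $m_i$ prefers $w_j$ to $\phi(i)$ by the edge condition, and $w_j$ prefers $m_i$ to $\psi(j)$ by the (H2) edge condition. So $(m_i,w_j)$ blocks.

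\emph{Consequences.} The inequality $a+b\le n$ follows from saturation of $S_m$ into $T_w$: $a\le|T_w|=n-b$. The bound $\beta(S)\ge B_S(\mu)\ge ab$ holds because the $ab$ pairs of $S_m\times S_w$ are distinct blocking pairs.

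The only step needing care is the consistency and completion in the sufficiency direction, i.e.\ checking that the partial assignments are disjoint and that the leftovers can be matched. This is handled by the disjointness of $S_m$ and $T_m$ (and of $S_w$ and $T_w$) and by the count above. Everything else is a direct reading of the definitions. I should also note that if $S_w$ or $S_m$ is empty, the max over the empty set is read as $-\infty$ (or $0$), and the statement is trivial.
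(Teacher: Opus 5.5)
Your proof is correct and follows the paper's argument essentially verbatim. Necessity reads off $\mu|_{S_m}$ and $\mu^{-1}|_{S_w}$ as saturating matchings for (H1) and (H2), and sufficiency glues the two injections on disjoint agent sets and completes arbitrarily; you also make explicit the count $a\le|T_w|=n-b$ behind $a+b\le n$, which the paper leaves implicit in its remark that the used sets are disjoint unions of size $a+b$.
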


\begin{proof}
$(\Rightarrow)$ If $\mu$ realizes the biclique then each $i\in S_m$ prefers every $j\in S_w$ to
$\mu(i)$, so $r_M(i,\mu(i))>\max_{j\in S_w}r_M(i,j)$ and $\mu(i)\in T_w$; thus $i\mapsto\mu(i)$
saturates $S_m$ in (H1), and symmetrically (H2).
$(\Leftarrow)$ Let $f:S_m\hookrightarrow T_w$, $g:S_w\hookrightarrow T_m$ realize (H1),(H2). The
men $S_m\sqcup g(S_w)$ and women $S_w\sqcup f(S_m)$ are used (disjoint unions, each of size $a+b$);
set $\mu(i)=f(i)$ on $S_m$, $\mu(g(j))=j$ on $S_w$, and extend arbitrarily. For
$(i,j)\in S_m\times S_w$: $r_M(i,\mu(i))=r_M(i,f(i))>r_M(i,j)$ and
$r_W(j,\mu^{-1}(j))=r_W(j,g(j))>r_W(j,i)$, so $(i,j)$ blocks.
\end{proof}

For $C_n$ the victim sets associated with the polarized construction have
$a=\flo{(n-1)/2}$ and $b=\lceil(n-1)/2\rceil$, and Lemma~\ref{lem:real} recovers the
blocking rectangle of Theorem~\ref{thm:constr}.

For fixed nonempty $S_w$, write $T_w=W\setminus S_w$ and define
\[
\alpha_i=\min_{j\in S_w}A_{ij},\qquad
C_i(S_w)=\{w\in T_w:A_{iw}<\alpha_i\}.
\]
Symmetrically, for fixed nonempty $S_m$, write $T_m=M\setminus S_m$ and
\[
\delta_j=\min_{i\in S_m}B_{ij},\qquad
D_j(S_m)=\{i\in T_m:B_{ij}<\delta_j\}.
\]
Then (H1) and (H2) in Lemma~\ref{lem:real} are exactly the full Hall conditions for the two
families $\{C_i(S_w):i\in S_m\}$ and $\{D_j(S_m):j\in S_w\}$.

\begin{lemma}[Near-worst suffix structure]\label{lem:suffix}
For every man $i$, $|C_i(S_w)|=\alpha_i$, and $C_i(S_w)$ is the maximal suffix of $i$'s
preference list contained in $T_w$: reading upward from his worst woman, it consists precisely of
the women encountered before the first member of $S_w$. The symmetric assertions hold for
$D_j(S_m)$.
\end{lemma}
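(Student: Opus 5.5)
The plan is to work directly from the encoding $A_{iw}=n-r_M(i,w)$. Since row $i$ of $A$ is a permutation of $\{0,\dots,n-1\}$, the women $w$ with $A_{iw}<\alpha_i$ are exactly the $\alpha_i$ women occupying the bottom $\alpha_i$ positions of $i$'s list (ranks $n-\alpha_i+1,\dots,n$). The argument then proceeds in three steps.

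\textbf{Step 1 (the first member of $S_w$ from the bottom).} Since $S_w\neq\emptyset$, the minimum $\alpha_i=\min_{j\in S_w}A_{ij}$ is attained at a unique woman $j^\ast\in S_w$, by injectivity of row $i$. This $j^\ast$ is the worst-ranked member of $S_w$ on $i$'s list. Reading upward from his worst woman, $j^\ast$ is therefore the first member of $S_w$ encountered, and it sits at position $\alpha_i$ from the bottom, counting from $0$.

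\textbf{Step 2 (the bottom block avoids $S_w$).} Let $L_i=\{w:A_{iw}<\alpha_i\}$, which has exactly $\alpha_i$ elements. Any $w\in L_i\cap S_w$ would satisfy $A_{iw}<\alpha_i=\min_{S_w}A_{i\cdot}$, which is a contradiction. Hence $L_i\subseteq T_w$. Then
\[
C_i(S_w)=L_i\cap T_w=L_i,
\]
so $|C_i(S_w)|=\alpha_i$. Moreover, $L_i$ is precisely the set of women strictly below $j^\ast$, i.e.\ those read before the first member of $S_w$.

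\textbf{Step 3 (maximality).} $L_i$ is a suffix of the list contained in $T_w$. Any longer suffix must include $j^\ast\in S_w$, so $L_i$ is the maximal such suffix.

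\textbf{Symmetric case.} The statement for $D_j(S_m)$ follows verbatim, using columns of $B$ in place of rows of $A$: each column of $B$ is a permutation, $\delta_j$ is attained at the unique worst member of $S_m$ on $w_j$'s list, and $\{i:B_{ij}<\delta_j\}$ is the block of men below it, which lies in $T_m$.

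No step is a real obstacle. The only point needing care is Step 1, which relies on row injectivity (so the minimizer is unique) together with $S_w\neq\emptyset$ (so $\alpha_i$ is defined). The degenerate case $\alpha_i=0$ gives an empty suffix, consistent with the claim.
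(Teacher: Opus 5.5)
Your proof is correct and follows the paper's own argument. Because row $i$ of $A$ is a permutation, exactly $\alpha_i$ women satisfy $A_{iw}<\alpha_i$, and none of them can lie in $S_w$ by minimality of $\alpha_i$, so $C_i(S_w)$ is exactly this bottom block of $i$'s list; identifying the unique minimizer $j^\ast$ and checking maximality just spells out what the paper states in a single sentence.
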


\begin{proof}
Row $i$ of $A$ is a permutation of $\{0,\dots,n-1\}$, so exactly $\alpha_i$ entries are
strictly below $\alpha_i$. None belongs to $S_w$, because every $j\in S_w$ has
$A_{ij}\ge\alpha_i$. Hence $|C_i|=\alpha_i$. In rank language, $A_{iw}<\alpha_i$ means that
$w$ lies below every member of $S_w$ in $i$'s list, which is exactly the maximal terminal block
before the first element of $S_w$ is reached from the bottom. The proof for $D_j$ is identical.
\end{proof}

\begin{lemma}[Sorted-depth Hall criterion]\label{lem:sorteddepth}
Let $S_m$ have size $a$, and write the depths increasingly as
$\alpha_{(1)}\le\cdots\le\alpha_{(a)}$. If $\alpha_{(t)}\ge t$ for every
$1\le t\le a$, then $\{C_i(S_w):i\in S_m\}$ satisfies the full Hall condition and hence
(H1). The symmetric statement holds for (H2).
\end{lemma}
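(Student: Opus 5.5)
The key structural fact is Lemma~\ref{lem:suffix}: each $C_i(S_w)$ is a suffix of $i$'s preference list with $|C_i(S_w)|=\alpha_i$. The sets are therefore \emph{not} nested across different men, since different men rank women differently. So I would not try to argue that a union of sets equals the largest one. Instead I would show that the sorted-depth condition forces Hall's condition by a pure cardinality argument. The point is that Hall's condition for a subfamily only needs $|\bigcup_{i\in X}C_i|\ge|X|$, and a union is at least as large as its largest member.

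Fix a nonempty $X\subseteq S_m$ with $|X|=k$, and let $i^\ast\in X$ maximize $\alpha_i$ over $X$. The $k$ depths $\{\alpha_i:i\in X\}$ form a sub-multiset of the sorted list $\alpha_{(1)}\le\cdots\le\alpha_{(a)}$. Hence their maximum is at least the $k$-th smallest element of the whole list: $\max_{i\in X}\alpha_i\ge\alpha_{(k)}$. This holds because a $k$-element sub-multiset cannot have all its elements below the $k$-th order statistic. By hypothesis $\alpha_{(k)}\ge k$. Using $|C_{i^\ast}(S_w)|=\alpha_{i^\ast}$ from Lemma~\ref{lem:suffix},
\[
\Bigl|\bigcup_{i\in X}C_i(S_w)\Bigr|\ \ge\ |C_{i^\ast}(S_w)|\ =\ \alpha_{i^\ast}\ \ge\ \alpha_{(k)}\ \ge\ k=|X|.
\]
This is exactly Hall's condition for every subfamily. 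Hall's marriage theorem then gives an SDR for $\{C_i(S_w):i\in S_m\}$, which is a matching saturating $S_m$ into $T_w$.

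To identify this with (H1), I would note the following. Every $w\in C_i(S_w)$ lies in $T_w$ and satisfies $A_{iw}<\alpha_i=\min_{j\in S_w}A_{ij}$. In rank terms this says $r_M(i,w)>\max_{j\in S_w}r_M(i,j)$. The converse also holds, so $C_i(S_w)$ is exactly the neighborhood of $i$ in the (H1) graph. The symmetric statement follows verbatim with $\delta_j$, $D_j(S_m)$, the column-permutation property of $B$, and the $D_j$ half of Lemma~\ref{lem:suffix}.

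There is no real obstacle here. The only step needing care is the order-statistic inequality $\max_{i\in X}\alpha_i\ge\alpha_{(k)}$, which must be handled correctly when depths tie.
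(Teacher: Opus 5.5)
Your proof is correct and is essentially the paper's argument: for any $X\subseteq S_m$ with $|X|=k$, you bound the union below by its largest member, use $|C_i(S_w)|=\alpha_i$ from Lemma~\ref{lem:suffix}, and apply $\max_{i\in X}\alpha_i\ge\alpha_{(k)}\ge k$. Your extra check that $C_i(S_w)$ is exactly the (H1) neighborhood of $i$ is the same identification the paper makes just before Lemma~\ref{lem:suffix}.
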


\begin{proof}
For any nonempty $X\subseteq S_m$ with $|X|=t$,
\[
\left|\bigcup_{i\in X}C_i\right|\ge\max_{i\in X}|C_i|
=\max_{i\in X}\alpha_i\ge\alpha_{(t)}\ge t.
\]
This is Hall's condition for every subset $X$.
\end{proof}

\begin{lemma}[Threshold sufficiency]\label{lem:thresh}
If $A_{ij}\ge a$ and $B_{ij}\ge b$ for all $(i,j)\in S_m\times S_w$, where
$a=|S_m|$ and $b=|S_w|$, then (H1) and (H2) hold, and therefore $\beta(S)\ge ab$.
\end{lemma}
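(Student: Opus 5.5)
The plan is to reduce the statement to the sorted-depth Hall criterion (Lemma~\ref{lem:sorteddepth}) and then invoke the realizability lemma (Lemma~\ref{lem:real}). The threshold hypothesis should force every depth to be at least the size of the opposite victim set, which is stronger than what the sorted-depth criterion needs.

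\emph{Step 1 (H1).} Fix $i\in S_m$. The hypothesis gives $A_{ij}\ge a$ for every $j\in S_w$, so $\alpha_i=\min_{j\in S_w}A_{ij}\ge a$. By Lemma~\ref{lem:suffix}, $|C_i(S_w)|=\alpha_i\ge a$, and $C_i(S_w)\subseteq T_w$.

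\emph{Step 2 (Hall for H1).} Every depth satisfies $\alpha_{(t)}\ge a\ge t$ for $1\le t\le a$. Lemma~\ref{lem:sorteddepth} then yields the full Hall condition for $\{C_i(S_w):i\in S_m\}$, which is (H1).

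As a direct check, for any nonempty $X\subseteq S_m$,
\[
\Bigl|\bigcup_{i\in X}C_i\Bigr|\ge\max_{i\in X}\alpha_i\ge a\ge|X|.
\]

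\emph{Step 3 (H2).} The symmetric argument applies to the women. For $j\in S_w$, the hypothesis $B_{ij}\ge b$ on all of $S_m$ gives $\delta_j\ge b$. Hence $|D_j(S_m)|=\delta_j\ge b$, again by Lemma~\ref{lem:suffix}. Since $b\ge|Y|$ for every $Y\subseteq S_w$, Hall's condition holds, which is (H2).

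\emph{Step 4 (conclusion).} Lemma~\ref{lem:real} now provides a matching $\mu$ with all of $S_m\times S_w$ blocking. Therefore $B_S(\mu)\ge ab$, and so $\beta(S)\ge ab$.

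The one step that needs care is pairing each threshold with the correct family. The condition $A\ge a$ controls the men's near-worst sets, whose Hall condition is indexed by $S_m$ of size $a$. The condition $B\ge b$ controls the women's near-worst sets, indexed by $S_w$ of size $b$. Beyond that the argument is immediate. The degenerate cases with $S_m$ or $S_w$ empty are trivial, since then $ab=0$.
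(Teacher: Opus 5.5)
Your proof is correct and matches the paper's argument: the threshold hypothesis gives $\alpha_i\ge a$ for every $i\in S_m$ and $\delta_j\ge b$ for every $j\in S_w$. The sorted-depth criterion (Lemma~\ref{lem:sorteddepth}) then yields (H1) and (H2), and Lemma~\ref{lem:real} gives $\beta(S)\ge ab$; your direct Hall check and your remark on empty victim sets are harmless additions the paper omits.
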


\begin{proof}
The hypothesis gives $\alpha_i\ge a$ for every $i\in S_m$, so
Lemma~\ref{lem:sorteddepth} yields (H1). Symmetrically $\delta_j\ge b$ for every
$j\in S_w$, yielding (H2). Apply Lemma~\ref{lem:real}.
\end{proof}

\begin{lemma}[Distinct-worst sufficiency]\label{lem:distinctworst}
Suppose the men of $S_m$ have pairwise distinct worst women, all outside $S_w$, and the women of
$S_w$ have pairwise distinct worst men, all outside $S_m$. Then (H1) and (H2) hold.
\end{lemma}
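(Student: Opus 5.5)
The plan is to exhibit the two systems of distinct representatives explicitly rather than checking Hall's condition subset by subset. For (H1), send each man $i\in S_m$ to his worst woman $\omega(i)$, the unique woman with $A_{i\omega(i)}=0$. By hypothesis the map $i\mapsto\omega(i)$ is injective on $S_m$, and every value lies in $T_w=W\setminus S_w$. So all that is needed is $\omega(i)\in C_i(S_w)$, i.e.\ $\omega(i)$ is an admissible neighbour of $i$ in the graph of (H1).

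To see $\omega(i)\in C_i(S_w)$, I would use Lemma~\ref{lem:suffix}: $C_i(S_w)$ is the maximal suffix of $i$'s list lying in $T_w$, and it has size $\alpha_i=\min_{j\in S_w}A_{ij}$. Since $\omega(i)\notin S_w$, every $j\in S_w$ has $A_{ij}\neq 0$, so $\alpha_i\ge1$. Hence the suffix is nonempty, and it therefore contains the bottom entry $\omega(i)$. Equivalently, $A_{i\omega(i)}=0<\alpha_i$. This is also where the paper's standing assumption that $S_w$ is nonempty enters: it makes $\alpha_i$ well defined.

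With this, $f(i)=\omega(i)$ is an injection $S_m\hookrightarrow T_w$ along edges of the (H1) graph. That is a matching saturating $S_m$, so (H1) holds. The argument for (H2) is symmetric: send each $j\in S_w$ to her worst man $\omega'(j)$, with $B_{\omega'(j)j}=0$. The hypotheses give injectivity and $\omega'(j)\in T_m$, and we get $\delta_j\ge1$ and hence $\omega'(j)\in D_j(S_m)$.

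There is no serious obstacle here. The only point needing care is the translation between the rank form of (H1), namely $r_M(i,w)>\max_{j\in S_w}r_M(i,j)$, and the depth form $A_{iw}<\alpha_i$. These agree because $A=n-r_M$ reverses the order, so the maximum of ranks over $S_w$ corresponds to the minimum of $A$ over $S_w$. Once that identification is made, which the paper already uses when it describes (H1) as the Hall condition for $\{C_i(S_w)\}$, the proof is complete.
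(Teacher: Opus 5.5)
Your proposal is correct and matches the paper's proof: the worst woman $\omega(i)$ has $A_{i\omega(i)}=0<\alpha_i$ because she lies outside $S_w$, so $\omega(i)\in C_i(S_w)$, distinctness makes $i\mapsto\omega(i)$ an SDR for (H1), and (H2) follows symmetrically. The appeal to Lemma~\ref{lem:suffix} is harmless but not needed, since the direct inequality $0<\alpha_i$ already gives membership.
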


\begin{proof}
The worst woman of $i$ has $A$-value zero. If she lies outside $S_w$, then
$0<\alpha_i$ and she belongs to $C_i(S_w)$. Pairwise distinct worst women therefore give an SDR
for (H1). The argument for (H2) is symmetric.
\end{proof}

\begin{remark}[Prefix unions are not Hall's condition]\label{rem:prefix}
For a general family of sets, the existence of an ordering $C_{i_1},\dots,C_{i_a}$ satisfying
$|\bigcup_{s\le t}C_{i_s}|\ge t$ for every prefix does not imply Hall's condition. For example,
$C_1=\{x,y,z\}$ and $C_2=C_3=\{x\}$ satisfy all prefix inequalities in the order
$(1,2,3)$, but $|C_2\cup C_3|=1<2$. Accordingly, every result in this paper uses either the full
Hall condition or a proved sufficient criterion such as Lemmas~\ref{lem:sorteddepth}--\ref{lem:distinctworst}.
\end{remark}

\begin{proposition}[Finite Hall verification]\label{prop:finiteHall}
For every strict complete preference profile of size $n\le4$, there exist $S_m,S_w$ satisfying
(H1) and (H2) with
$|S_m|\,|S_w|\ge\flo{(n-1)^2/4}$.
\end{proposition}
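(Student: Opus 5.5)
For $n\le 3$ the claim is elementary and we treat it by hand. For $n=1,2$ the target $\lfloor(n-1)^2/4\rfloor$ is $0$, so the empty choice $S_m=S_w=\emptyset$ satisfies (H1) and (H2) vacuously. For $n=3$ the target is $1$, and we need a single man $i$ and woman $j$ with two properties. First, $j$ is not $i$'s worst woman, which is (H1) with $C_i=\{$women below $j\}\ne\emptyset$ by Lemma~\ref{lem:suffix}. Second, $i$ is not $j$'s worst man. Such a pair exists for every profile. Each man has two non-worst women, giving $6$ cells with $A_{ij}\ge1$. Each woman has two non-worst men, giving $6$ cells with $B_{ij}\ge1$. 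These are $12$ cells inside a $9$-cell grid, so by the inclusion--exclusion step of Lemma~\ref{lem:rearr} they share at least $3$ cells. Any shared cell works by Lemma~\ref{lem:thresh} with $a=b=1$. The same count, $|U_1\cap V_1|\ge n(n-2)>0$, handles the $1\times1$ case for every $n\ge3$.

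The substantive case is $n=4$, where the target is $\lfloor 9/4\rfloor=2$. It therefore suffices to find either a $1\times2$ or a $2\times1$ biclique satisfying (H1) and (H2). We plan a factorized exhaustive search, as the abstract indicates, because the raw profile space $(4!)^8$ is too large to enumerate naively.

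The factorization rests on the following observation. For a fixed candidate pair $(S_m,S_w)=(\{i\},\{j,j'\})$, condition (H1) depends only on man $i$'s list. It holds iff some woman outside $\{j,j'\}$ lies below both, i.e.\ $\min(A_{ij},A_{ij'})\ge1$. Condition (H2) depends only on women $j$ and $j'$. It asks for distinct men $i_1,i_2\ne i$ with $i_1$ below $i$ for $j$ and $i_2$ below $i$ for $j'$. By Hall this reduces to $\delta_j,\delta_{j'}\ge1$ together with not both $j$ and $j'$ having the same single man below $i$. The $2\times1$ case is symmetric. So we first enumerate the men's profile, $24^4$ options, and reduce it modulo the $S_4\times S_4$ relabeling symmetry. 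For each man-profile we record, per man, which women-pairs satisfy (H1). We then ask whether some women's profile defeats every candidate, checking women pairwise, since each (H2) instance involves only two women's lists. This is a constraint-satisfaction problem that decomposes over pairs of women, and the search is small.

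Before coding, we would try to find a human argument to shrink the computation. Suppose no $1\times2$ biclique is realizable. Then for every $i$ and every pair $\{j,j'\}$ of $i$'s non-worst women, the women side fails (H2). A natural attempt is to take $j,j'$ to be $i$'s top two women, which have $A\ge2$, and count via $|U_2\cap V_1|\ge n(n-3)=4$ cells. This shows that some man has one of his top two women who ranks him above her worst. The remaining difficulty is arranging the second woman and the SDR condition, and that is where we expect case explosion. The main obstacle is therefore making the exhaustive $n=4$ check genuinely complete and trustworthy. To that end we would reduce by symmetry and by the decomposition above, verify the code against the $C_4$ rectangle of Theorem~\ref{thm:constr}, and report the number of residual cases.
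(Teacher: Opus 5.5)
Your proposal follows the paper's proof: the cases $n\le2$ are trivial, $n=3$ uses the same counting argument (the zeros of $A$ and $B$ cannot cover all nine cells), and $n=4$ is settled by a factorized exhaustive computation that exploits the fact that (H1) depends only on the men's lists and (H2) only on the women's lists. The paper's check stores, for each of the $24^4$ one-sided profiles, a 116-bit mask over all candidates with $|S_m|+|S_w|\le4$ and $|S_m||S_w|\ge2$, and finds no disjoint H1/H2 pair among the 12{,}924 distinct masks on each side; your restriction to $1\times2$ and $2\times1$ candidates loses nothing, since every sub-rectangle of a realized biclique is realized by the same matching, but your write-up only plans this computation, so the $n=4$ case is established only once the search is actually run.
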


\begin{proof}
The cases $n\le2$ are immediate because the target is zero. For $n=3$, the three zero entries of
$A$ (one in each row) and the three zero entries of $B$ (one in each column) cover at most six of
the nine cells. Hence some cell $(i,j)$ has $A_{ij},B_{ij}\ge1$; taking
$S_m=\{i\}$ and $S_w=\{j\}$ gives nonempty $C_i$ and $D_j$ and therefore both one-vertex
Hall conditions.

For $n=4$, the supplementary exact program enumerates all $24^4=331{,}776$ one-sided strict
profiles. For each men's profile it records, as a 116-bit mask, every pair $(S_m,S_w)$ with
$|S_m|+|S_w|\le4$ and $|S_m||S_w|\ge2$ satisfying (H1); it analogously records the (H2) mask
for each women's profile. There are $12{,}924$ distinct masks on each side, and exhaustive
comparison finds no disjoint H1/H2 mask pair. Because the two side profiles are independent, this
factorized check covers all
$(24^4)^2=110{,}075{,}314{,}176$ labelled two-sided profiles. The source and deterministic output
are supplied with the manuscript.
\end{proof}

\section{Limitations of natural proof strategies}\label{sec:obstruction}

Three observations delimit several natural approaches to the exact Shield-Core equality.

\subsection{A target-level orbit average at $C_n$}

The shift $\tau:g\mapsto g+1$ on both sides is an automorphism of $C_n$, so it permutes
matchings while preserving blocking counts.

\begin{proposition}[Orbit average of the polarized matching]\label{prop:shift}
Let $\mu^\ast$ be the explicit matching of Definition~5.2. The uniform distribution on its
shift orbit $\{(\mu^\ast)^{\tau^t}:0\le t<n\}$ has expected blocking count
\[
\left\lfloor\frac{(n-1)^2}{4}\right\rfloor
=\frac1n\sum_{i,j}\min(A_{ij},B_{ij})
\qquad\text{for }C_n.
\]
\end{proposition}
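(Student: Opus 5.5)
The plan is to combine three facts already established. First, by Lemma~\ref{lem:auto} the shift $\tau$ is an automorphism of $C_n$, so every matching $(\mu^\ast)^{\tau^t}$ has the same blocking count as $\mu^\ast$. Second, the uniform distribution on the orbit is a probability distribution supported on matchings, each carrying this same count. Hence its expectation equals $B_{C_n}(\mu^\ast)$, whatever the size of the orbit. If the orbit has fewer than $n$ distinct elements, the list $\{(\mu^\ast)^{\tau^t}:0\le t<n\}$ still carries the uniform weight $1/n$ on each index $t$, and the conclusion is unchanged because every term is the same number.

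Next we invoke Theorem~\ref{thm:constr}, which gives $B_{C_n}(\mu^\ast)=m(n-1-m)=\flo{(n-1)^2/4}$. This shows that the orbit expectation equals $\flo{(n-1)^2/4}$.

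For the second equality we use the equality case of Lemma~\ref{lem:min}, or equivalently Corollary~\ref{cor:minbench} at $C_n$. It states $\sum_{i,j}\min(A_{ij},B_{ij})=n\flo{(n-1)^2/4}$ for $C_n$. As a sanity check we can redo this directly. Write $x=(j-i)\bmod n$. In $C_n$ we have $A_{ij}=n-1-x$ and $B_{ij}=x$. Each residue $x$ occurs in exactly $n$ cells. Therefore
\[
\tfrac1n\sum_{i,j}\min(A_{ij},B_{ij})=\sum_{x=0}^{n-1}\min(x,n-1-x),
\]
which evaluates to $m^2$ for $n=2m+1$ and to $m(m-1)$ for $n=2m$. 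Both agree with $\flo{(n-1)^2/4}$.

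There is no substantive obstacle here. The only point needing care is that the orbit average is formally an average of identical values, so it is trivially the common value. The content of the proposition is the coincidence of the achieved count with the min-rearrangement benchmark, not any averaging phenomenon. One could also cross-check with Proposition~\ref{prop:arc} that shifting preserves the displacement functions up to relabeling, but Lemma~\ref{lem:auto} already makes that unnecessary.
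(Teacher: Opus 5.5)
Your proposal is correct and matches the paper's proof: the shift automorphism gives every orbit element the blocking count $\lfloor (n-1)^2/4\rfloor$ from Theorem~\ref{thm:constr}, and the second equality is the equality case of Lemma~\ref{lem:min}. Your direct residue computation $\sum_{x=0}^{n-1}\min(x,n-1-x)$ and your remark about a possibly shorter orbit are both correct but optional.
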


\begin{proof}
Every orbit element has the same blocking count as $\mu^\ast$ by the shift automorphism, and
Theorem~\ref{thm:constr} gives that count. The second equality is the equality case of
Lemma~\ref{lem:min}.
\end{proof}

Thus a correlated distribution attaining the target functional exists at the cyclic profile.
This statement does not assert that $\mu^\ast$ is maximum-blocking; that is exactly the open
arc-lemma upper-bound problem.

\subsection{Two-parameter linear objectives admit stable optima}

\begin{proposition}[Stable optima for two-parameter linear costs]\label{prop:linear}
For any $c,c'\in\mathbb R$, the assignment objective
\[
\sum_i\bigl(c\,A_{i,\mu(i)}+c'\,B_{i,\mu(i)}\bigr)
\]
on $C_n$ has a cyclic-shift optimum, hence a stable optimum. If $c'\ne c$, the appropriate
extreme cyclic shift is the unique optimum; if $c'=c$, all matchings have the same value.
\end{proposition}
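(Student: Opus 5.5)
The plan is to rewrite the objective on $C_n$ in terms of the displacement $d(g)=(\mu(g)-g)\bmod n$ of Proposition~\ref{prop:arc}. The objective then becomes an affine function of the total displacement $\sum_g d(g)$. That total is trapped between two values, each attained only by a cyclic shift.

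First, from \eqref{eq:Cn} and the definitions $A_{ij}=n-r_M(i,j)$ and $B_{ij}=n-r_W(j,i)$, put $x=(j-i)\bmod n$. Then $A_{ij}=n-1-x$ and $B_{ij}=x$, as already used in the proof of Lemma~\ref{lem:rearr}. Hence for any matching $\mu$,
\[
\sum_i\bigl(c\,A_{i,\mu(i)}+c'\,B_{i,\mu(i)}\bigr)=c\,n(n-1)+(c'-c)\sum_i d(i).
\]
If $c'=c$, this is constant, so all matchings tie. That settles the last clause.

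Second, I control $\Sigma(\mu)=\sum_i d(i)$. Each $d(i)\in\{0,\dots,n-1\}$, so $0\le\Sigma(\mu)\le n(n-1)$. Modulo $n$ we have $\Sigma\equiv\sum_i\mu(i)-\sum_i i\equiv0$, since $\mu$ is a bijection; this congruence is not needed for the extremes but is a nice check. The extremes are sharp and rigid:
\begin{itemize}[nosep]
\item $\Sigma=0$ holds iff every $d(i)=0$, i.e.\ $\mu=\mu_0$;
\item $\Sigma=n(n-1)$ holds iff every $d(i)=n-1$, i.e.\ $\mu=\mu_{n-1}$.
\end{itemize}
Both use only nonnegativity of each term and the termwise upper bound.

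Third, I combine the two steps. Suppose $c'\neq c$. Minimizing the objective means minimizing $(c'-c)\Sigma$, which selects $\mu_0$ uniquely if $c'>c$ and $\mu_{n-1}$ uniquely if $c'<c$. Maximizing selects the opposite extreme shift, again uniquely. In every case the optimum is a cyclic shift $\mu_t$, and Proposition~\ref{prop:shifts-stable} makes it stable.

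The only point needing care is the interpretation of ``optimum'' as either min or max, so I will state both. The main obstacle is simply verifying the indexing convention $B_{ij}=x$ rather than $n-1-x$. Either convention yields the same affine structure, with only the sign of $c'-c$ swapped, so the conclusion is unaffected.
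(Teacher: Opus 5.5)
Your proof is correct and is essentially the paper's argument: both rewrite the weight on $C_n$ as $c(n-1)+(c'-c)x$ with $x=(j-i)\bmod n$ and read off the extreme constant-displacement shifts, and your summed form $c\,n(n-1)+(c'-c)\sum_i d(i)$ makes uniqueness a bit more explicit than the paper's per-edge phrasing. One small caution: your closing aside is inaccurate, since under the other convention $B_{ij}=n-1-x$ the weight would be $(c+c')(n-1-x)$ and the degenerate case would become $c'=-c$, so the argument genuinely relies on $B_{ij}=n-r_W(j,i)=(j-i)\bmod n=x$, which does hold.
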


\begin{proof}
On $C_n$, $A_{ij}=n-1-x$ and $B_{ij}=x$, where $x=(j-i)\bmod n$. Thus the edge weight is
$c(n-1)+(c'-c)x$. If $c'>c$, maximizing (respectively minimizing) selects the constant
displacement $n-1$ (respectively $0$); the roles reverse if $c'<c$. If $c'=c$, every edge has
the same weight. The selected constant-displacement matchings are stable by
Proposition~\ref{prop:shifts-stable}.
\end{proof}

Consequently, this entire two-parameter family of linear assignment objectives always admits a
stable optimum and cannot, by itself, force the polarized high-blocking witness. When $c'=c$,
the objective is completely indifferent among matchings; when $c'\ne c$, it selects a stable
cyclic shift.

\subsection{A counterexample to a stronger rectangularity claim}

A single blocking graph need not contain a biclique as large as the local-averaging target.
The following finite example shows that a large blocking-edge count need not be supported by a
comparably large blocking rectangle.

\begin{proposition}[Computer-assisted counterexample]\label{prop:biclique}
There is a size-$5$ instance for which
\[
\frac15\sum_{i,j}\min(A_{ij},B_{ij})=7,\qquad
\beta(S)=13,
\]
but no blocking graph of any matching contains a complete bipartite subgraph of area greater
than $6$.
\end{proposition}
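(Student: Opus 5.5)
The statement is existential and finite, so the plan is to exhibit one explicit size-$5$ instance $S$ and certify three numbers about it by a combination of hand computation and exhaustive enumeration. First I would find a candidate by computer search over size-$5$ profiles. I would start with perturbations of $C_5$: swap a few adjacent ranks in some men's or women's lists, which breaks the Latin and anti-phase structure. For each candidate I compute $\sum_{i,j}\min(A_{ij},B_{ij})$ and keep those with value $35$, i.e.\ benchmark $7$. Lemma~\ref{lem:min} gives $\ge 5\flo{16/4}=20$, so $35$ lies above the minimum and is attainable by moderate perturbation. Among these I keep instances whose maximum over matchings of the largest blocking biclique is small relative to $\beta(S)$.

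For the chosen instance, the first certification is the benchmark value $7$. This is a direct $25$-cell computation from the printed $A$ and $B$ matrices, which I would display in the proof so the reader can check it by hand.

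The second certification is $\beta(S)=13$. I would enumerate all $5!=120$ matchings and compute $B_S(\mu)$ for each. I would then report a witness $\mu$ with $13$ blocking pairs, listing those pairs explicitly so the lower bound is checkable by hand. The upper bound $\beta(S)\le 13$ is the exhaustive part and is left to the supplementary program.

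The third certification is the biclique claim, and I would base it on Lemma~\ref{lem:real}. A blocking graph of some $\mu$ contains $S_m\times S_w$ iff (H1) and (H2) hold. So "no matching has a blocking biclique of area $>6$" is equivalent to: no pair $(S_m,S_w)$ with $|S_m||S_w|\ge 7$ satisfies both (H1) and (H2). Since $a+b\le5$ forces $ab\le 6$, no such pair exists at all, so this part is automatic. That makes the bound of $6$ trivially true for every size-$5$ instance. The real content of the proposition is therefore the contrast between $\beta(S)=13$ and the area cap of $6$. It is cleaner to present it that way: the observation $a+b\le n$ from Lemma~\ref{lem:real} gives the area cap, and only the two numerical values require the instance.

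The main obstacle is the exhaustive upper bound $\beta(S)\le 13$. It is routine but not hand-checkable in print. I would mitigate this by supplying the $120$-row blocking-count table, or by a partial argument that bounds the number of blocking pairs per man using his rank of $\mu(i)$. If the search also has to achieve benchmark exactly $7$ simultaneously, I expect to iterate the search until both values match.
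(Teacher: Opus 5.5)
\textbf{The gap: no instance is exhibited.} The proposition is existential, and your own (correct) observation shows that its biclique clause holds for every size-$5$ profile. So its entire content is the existence of one profile with $\tfrac15\sum_{i,j}\min(A_{ij},B_{ij})=7$ and $\beta(S)=13$ exactly, and that is the part your proposal leaves undone. A search plan ending with ``iterate the search until both values match'' assumes exactly this existence. Your heuristic is also unsupported. Lemma~\ref{lem:min} only gives the lower bound $20$ for the sum of minima. It says nothing about which larger values, let alone which pairs (benchmark, $\beta$), occur near $C_5$. The paper's instance is in fact far from $C_5$: $A_{00}+B_{00}=8$ and $A_{03}+B_{03}=0$, whereas $A+B\equiv4$ on $C_5$. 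The paper closes the gap by printing explicit matrices $A,B$ whose rows (for $A$) and columns (for $B$) are permutations and whose entrywise minima sum to $35$. It adds the witness $\mu=(1,0,4,2,3)$ with its $13$ blocking pairs listed, and delegates $\beta(S)\le13$ and the biclique claim to enumeration of all $120$ matchings.

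\textbf{The rest of your outline is sound, and in one respect better than the paper.} Since $(i,\mu(i))$ never blocks, a blocking biclique $S_m\times S_w$ forces $\mu(S_m)\cap S_w=\emptyset$. Hence $a+b\le5$ and $ab\le\lfloor 25/4\rfloor=6$ for every matching of every size-$5$ instance. This replaces the paper's enumeration for that clause; the enumeration also shows area $6$ is attained, which the statement does not require.

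Your fallback for $\beta(S)\le13$ also works on the paper's instance, provided you use both sides. A matching has at most $\sum_i(4-A_{i\mu(i)})$ and at most $\sum_i(4-B_{i\mu(i)})$ blocking pairs. So $14$ or more requires $\sum_iA_{i\mu(i)}\le6$ and $\sum_iB_{i\mu(i)}\le6$. A short case split on who receives $w_1$ and $w_3$ shows that only $(1,2,4,0,3)$ and $(3,2,4,0,1)$ satisfy both, and these have $13$ and $12$ blocking pairs respectively.

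Once the instance is supplied, your argument becomes a complete proof that can be checked by hand.
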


\begin{proof}
Take
\[
A=\begin{pmatrix}
4&2&1&0&3\\
0&4&2&3&1\\
2&4&1&3&0\\
2&4&0&3&1\\
3&2&1&0&4
\end{pmatrix},\qquad
B=\begin{pmatrix}
4&0&1&0&2\\
3&2&0&2&4\\
2&3&2&4&0\\
1&4&3&3&1\\
0&1&4&1&3
\end{pmatrix}.
\]
Every row of $A$ and every column of $B$ is a permutation of $\{0,1,2,3,4\}$, so these
matrices define a valid preference profile. For the matching
$\mu=(1,0,4,2,3)$, the $13$ blocking pairs are
\[
\begin{split}
&(0,0),(0,4),(1,1),(1,3),(1,4),(2,1),(2,3),\\
&(3,1),(3,3),(3,4),(4,1),(4,2),(4,4).
\end{split}
\]
An exhaustive enumeration of all $5!=120$ matchings gives $\beta(S)=13$ and shows that the
largest biclique contained in any blocking graph has area $6$. The enumeration is reproduced
by the supplementary verification script.
\end{proof}

Thus the full blocking-edge count and the largest blocking rectangle can differ substantially.
Consequently, a biclique-based certificate cannot be treated as equivalent to a global
blocking-count argument.

\section{Conclusion and scope}\label{sec:conclusion}

The Shield number asks for a minimax guarantee on the number of blocking pairs in strict complete
stable-marriage profiles. The layer-cake rearrangement argument proves the universal quadratic
bound
\[
\sigma(n)\ge\left\lceil\frac{n(n-2)}6\right\rceil,
\]
and shows why the uniform first moment cannot reach the larger
$\lfloor(n-1)^2/4\rfloor$ benchmark. The min-rearrangement lemma identifies that benchmark
sharply, while the polarized construction realizes it inside the cyclic Hollow-Shell profile.
The two-SDR lemma gives an exact characterization of prescribed blocking bicliques, and the
near-worst suffix, sorted-depth, threshold, and distinct-worst lemmas provide valid structural
certificates for Hall feasibility. The factorized exhaustive calculation confirms the balanced
Hall property for every profile through size four.

The paper deliberately stops at these unconditional statements. It does not claim the exact
Shield-Core equality for general $n$, a universal Hall-feasible biclique theorem, or the general
upper bound $\beta(C_n)\le\lfloor(n-1)^2/4\rfloor$. Any future proof, counterexample, or new
conjectural formulation concerning those questions will be developed as separate work. This
separation keeps the present article's claims stable and independently verifiable.

\section*{Computational note}
No computer-assisted step is used in the proofs of the general analytic results. The first
supplementary program checks the polarized construction and displacement formula for finite cyclic
instances and reproduces the size-five rectangularity counterexample in
Proposition~\ref{prop:biclique}. A second deterministic C program performs the factorized
exhaustive size-four verification used in Proposition~\ref{prop:finiteHall}. The programs,
compilation instructions, and exact outputs are included as supplementary material. No random or
heuristic search is used to support a theorem in this manuscript.

\section*{Declaration on the use of artificial intelligence tools}
Generative AI tools, including ChatGPT, Claude, and Gemini, were used to assist with language
editing, structural organization, exploratory proof checking, and the preparation of verification
code. The author independently reviewed and verified the mathematical statements, proofs,
references, and computational outputs. The author assumes responsibility for all content.

\end{document}